\documentclass[12pt, reqno]{amsart}

\usepackage[utf8]{inputenc}
\usepackage[T1]{fontenc}
\usepackage{tikz,tikz-3dplot}
\usepackage{amsmath,amsfonts,amssymb,amsmath,latexsym,mathrsfs}
\usepackage{shuffle}
\usepackage[all]{xy}
\usepackage{stackengine}
 
\usepackage{enumerate}

\usepackage{hyperref}
 
\usepackage{tabu}
\usepackage{tikz-cd} 

\usepackage{comment}
\usepackage{url}
\usepackage{tikz}

\usepackage{xfrac}

\newtheorem{theorem}{Theorem}[section]

\newtheorem{proposition}[theorem]{Proposition}

\newcommand{\lt}{\triangleright}
\newcommand{\Conj}{\operatorname{Conj}}
\begin{document}

\title{On quandles with isomorphic enveloping groups}
\keywords{Quandles, enveloping groups}

\author{Mohamad \textsc{Maassarani} }
\maketitle
\begin{abstract}We give a method to construct two injective connected finite quandles of different cardinal with isomorphic enveloping groups. \end{abstract}
\section*{Introduction and main results}
For a finite dimensionl Lie algebra, the dimension of the Lie algebra is equal to the Gelfand-Kirillov dimension of its enveloping algebra (\cite{GK}). Hence, two finite dimensional Lie algebras with isomorphic enveloping algebras have the same dimension. This leads to the question wether two finite quandles with isomorphic enveloping groups have the same cardinal. For $Q$ a quandle the universal map $Q\to G(Q)$, where $G(Q)$ is the enveloping group, is not always injective and when it is not injective the image of this mapping defines a smaller quandle then $Q$ having enveloping group isomorphic to $G(Q)$ (\cite{GQ}). Hence, one can wonder wether two injective quandles ($Q\to G(Q)$ injective) with isomorphic enveloping groups has the same cardinal. We answer this question negatively. One can show that two finite connected affine quandles (Alexander quandles) with isomorphic enveloping groups have same cardinal and hence asks wether the question we raise before is true for connected quandles in general. We also answer this question negatively.\\\\
To answer the questions we raised, we consider finite centerless perfect groups $G$ with trivial Schur multipliers. A conjugacy class in $G$ has a quandle structure as a subquandle of the conjugacy quandle of $G$. We show that (as stated in \cite{Eis}) if the conjugacy class generates $G$ then the quandle it defines is injective, connected and with enveloping group isomorphic to $G\times \mathbb{Z}$. Hence, exibiting two generating conjugacy classes of $G$ with different sizes gives an exemple of injective connected quandles with isomorphic enveloping groups but with different sizes. An example of such conjugacy classes can be obtain from $G=PSL(2,8)$. The group $PSL(2,8)$ is simple and hence any non trivial conjugacy class generates the group and $PSL(2,8)$ has trivial Schur multiplier and non trivial conjugacy classes of different sizes. This is done in section 2. In section 1, we recall the definitions of quandles, their enveloping groups, related concepts and some properties.\\\\  
\section{Definitions and properties}
\emph{Quandles, quandle morphisms, and subquandles :} A quandle is a set $Q$ with a binary operation $\lt:Q\times Q\to Q$ such that
\begin{enumerate}
\item $x\lt x=x$ for all $x\in Q$;
\item for every $x\in Q$ the map $y\mapsto x\lt y$ for $y\in Q$ is bijective;
\item $x\lt(y\lt z)=(x\lt y)\lt(x\lt z)$ for all $x,y,z\in Q$.
\end{enumerate}
A morphism of quandles $\varphi:Q\to Q'$ is a map with $\varphi(x\lt y)=\varphi(x)\lt'\varphi(y)$. An isomorphism is a bijective morphism. A subset $S\subset Q$ is a subquandle if $x\lt y\in S$ for all $x,y\in S$ and $S$ is a quandle for the restricted operation.\\\\
\emph{Conjugacy quandles :} For a group $G$, the set $G$ with $x\lt y=xyx^{-1}$ is a quandle denoted $\Conj(G)$ and called the conjugacy quandle of $G$. Every union of conjugacy classes of $G$ is a subquandle of $\Conj(G)$. In particular a conjugacy class $C$ of $G$ is a quandle.\\\\
\emph{Inner automorphism group :} For $Q$ a quandle and $x \in Q$ the left translation $L_x$ by $x$ is the bijection given by $L_x(y)=x\lt y$ for $y\in Q$. The inner group $Inn(Q)$ is the subgroup of bijections of $Q$ generated by the left translations. The map $\theta_Q : Q\to Inn(Q),x\mapsto L_x$ is a quandle morphism with $Inn(Q)$ regarded as a conjugacy quandle. A quandle is called connected if it concists of one orbit under the action of $Inn(Q)$.\\\\
\emph{Enveloping groups :} The enveloping group of a quandle $Q$ is
$$G(Q)=\langle e_x,\ x\in Q\mid e_{x\lt y}=e_xe_ye_x^{-1}\ \ x,y\in Q\rangle .$$
For every group $H$ and quandle morphism $\varphi:Q\to \Conj(H)$ there is a unique group morphism $G(Q)\to H$ with $e_x\mapsto \varphi(x)$. A morphism $Q\to Q'$ induces a group morphism $G(Q)\to G(Q')$, so isomorphic quandles have isomorphic enveloping groups. The quandle $Q$ is called injective if $\varphi_Q:Q\to G(Q)$, $x\mapsto e_x$, is injective. We note that $\varphi_Q$ is a quandle morphism if we regared $G(Q)$ as a conjugacy quandle.\\
The abelianisation of $G(Q)$ is isomorphic to the free abelian group on the orbits of $Q$ with respect to $Inn(Q)$. The abelianisation map with this identification maps a generator $e_x$ to the orbit of $x$. Hence, if $Q$ is connected $G(Q)^{ab}\simeq \mathbb{Z}$.
As we have seen the mapping $\theta_Q : Q\to Inn(Q)$ is a quandle morphism for $Inn(Q)$ regarded as the conjugacy quandle. Hence we have a unique group morphism $\pi : G(Q) \to Inn(Q)$ such that $\pi(e_x)=L_x$ for $x\in Q$. If $Q$ is injective then $Ker(\pi)$ is exactly the center $Z$ of $G(Q)$ (\cite{JD}). In particular, $G(Q)/Z \simeq Inn(Q)$.\\\\
\emph{Affine quandles-Alexander quandles :} Let $A$ be an abelian group and $t$ be an automorhism of $A$. The Alexander quandle $Al(A,t)$ is the set $A$ equipped with the operation $x\lt y=ty+(1-t)x$. An affine quandle is an Alexander quandle. For $A$ finite the quandle $Al(A,t)$ is connected if and only if $1-t$ is an automorphism of $A$, in wich case the inner automorphism group is isomorphic to the semidirect product $A\rtimes <t>$. The Derived subgroup of the inner automorphism group in this case is $A$. Alexander quandles are injective. Hence, the derived group of $G(Al(A,t))/Z\simeq Inn(Al(A,t))$ (with $Z$ the center of the group) is $A$. It follows that two finite connected Alexander quandles with isomorphic enveloping groups has the same cardinal. 
\section{Enveloping groups of generating conjugacy classes of some groups}

\begin{proposition}
Let $G$ be a finite centerless group and $C$ be a conjugacy class $($and hence a subquandle$)$ generating $G$. 
\item[1)] $Inn(C)$  is isomorphic to $G$.
\item[2)] $C$ is a connected quandle.
\item[3)] $C$ is an injective quandle.
\item[4)] We have an exact sequence $1\to Z \to G(C) \to G \to 1$ where $Z$ is the center of $G(C)$.
\item[5)] If the Schur multiplier of $G$ is trivial then $Z\cap [G(C),G(C)]=1$.
\end{proposition}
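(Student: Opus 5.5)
The plan is to treat items 1)--3) via the natural map to $G$, derive 4) from the fact recalled in Section 1 that $Ker(\pi)$ is the center when $Q$ is injective, and obtain 5) from a central extension argument.

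\emph{Items 1)--3).} For 1), I define $\Phi : Inn(C)\to G$ as follows. Each $L_x$ ($x\in C$) is the restriction to $C$ of conjugation by $x$, and $Inn(C)$ is generated by the $L_x$. Consider the map $G\to Sym(C)$, $g\mapsto (c\mapsto gcg^{-1})$. Its image is generated by the images of the $x\in C$, because $C$ generates $G$; so the image is exactly $Inn(C)$. Its kernel is the centralizer of $C$. An element centralizing $C$ centralizes $\langle C\rangle=G$, so it lies in $Z(G)=1$. Hence $G\simeq Inn(C)$.

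For 2), $G$ acts transitively on the conjugacy class $C$ by conjugation. By 1) this action is the action of $Inn(C)$, so $C$ is connected.

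For 3), the inclusion $C\hookrightarrow \Conj(G)$ is a quandle morphism. By the universal property it induces $\rho : G(C)\to G$ with $\rho(e_x)=x$. Since $\rho\circ\varphi_C$ is the inclusion, which is injective, $\varphi_C$ is injective.

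\emph{Item 4).} Under the identification of 1), $\rho$ coincides with the map $\pi: G(C)\to Inn(C)$, since both send $e_x$ to conjugation by $x$. The map $\rho$ is surjective because $C$ generates $G$. Since $C$ is injective by 3), the fact recalled in Section 1 (\cite{JD}) gives $Ker(\pi)=Z$. This yields the exact sequence $1\to Z\to G(C)\to G\to 1$.

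\emph{Item 5).} This is the main step. Write $\tilde G=G(C)$ and $\tilde G'=[\tilde G,\tilde G]$. Since $Z$ is central, the sequence in 4) is a central extension of $G$.
\begin{itemize}
\item Applying the five-term exact sequence of the Hochschild--Serre spectral sequence to this extension with trivial $\mathbb{Z}$ coefficients gives
$$H_2(\tilde G)\to H_2(G)\to Z/[Z,\tilde G]\to H_1(\tilde G)\to H_1(G)\to 0 .$$
\item Since $Z$ is central, $Z/[Z,\tilde G]=Z$.
\item Since $H_2(G)$ is the Schur multiplier, which is trivial by hypothesis, the map $Z\to \tilde G^{ab}$ is injective.
\item This map is the composite $Z\hookrightarrow\tilde G\to\tilde G/\tilde G'$, so its kernel is $Z\cap\tilde G'$. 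Injectivity gives $Z\cap \tilde G'=1$.
\end{itemize}

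As a fallback that avoids spectral sequences, I would use Hopf's formula. Present $G=F/R$ with $F$ free and lift to $\tilde G$, so that $Z\cap \tilde G'$ becomes a quotient of $(R\cap[F,F])/[F,R]=H_2(G)=0$. The only point needing care is verifying the identification of the connecting map with the inclusion-then-abelianize map. That identification is standard, so I expect no real obstacle beyond quoting it correctly.
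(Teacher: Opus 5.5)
Your proof is correct. For items 1)--4) you follow the paper's argument essentially verbatim:
\begin{itemize}
\item conjugation gives $G\to Inn(C)$, whose kernel is $Z(G)=1$;
\item transitivity of conjugation on $C$ gives connectedness;
\item the induced map $G(C)\to G$ gives injectivity of $\varphi_C$;
\item $Ker(\pi)=Z$ from \cite{JD} gives the exact sequence.
\end{itemize}
You are slightly more careful than the paper in checking that the image of $G$ is all of $Inn(C)$, and that $\rho$ agrees with $\pi$ under this identification.

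The real difference is item 5). The paper simply invokes Proposition 3.2 and Corollary 3.9 of \cite{Ir}. You instead prove it from the five-term exact sequence of the central extension $1\to Z\to G(C)\to G\to 1$: there $H_2(G)=0$ forces $Z\to G(C)^{ab}$ to be injective, i.e.\ $Z\cap[G(C),G(C)]=1$. This is valid, since the map $Z/[Z,\tilde G]\to H_1(\tilde G)$ in the five-term sequence is indeed the one induced by inclusion.

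Your Hopf-formula fallback also works, with one detail to state. A lift $\psi:F\to\tilde G$ need not be surjective, but it satisfies $\tilde G=\psi(F)Z$. Since $Z$ is central, $\tilde G'=\psi([F,F])$, hence $Z\cap\tilde G'=\psi(R\cap[F,F])$. Because $\psi([F,R])\subseteq[\tilde G,Z]=1$, this is a quotient of $(R\cap[F,F])/[F,R]$.

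What your route buys is self-containment and transparency. It shows that 5) is a purely group-theoretic fact about central extensions: in fact $Z\cap\tilde G'$ is exactly the image of $H_2(G)$ under the transgression, and neither the quandle structure nor finiteness is used. The paper's route is shorter but sends the reader to an external reference for the one non-formal step.
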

 \begin{proof}
Since $C$ generates $G$ the conjugacy by an element of $G$ restricted to $C$ is the product of left translations of $C$. This gives a group morphism $G\to Inn(C)$. This morphism is injective since $C$ generates $G$. Indeed, if the conjugacy by an element of $G$ fixes $C$ pointwise, hence it fixes $G$ pointwise so it is central and hence equal to $1$, since $G$ is centerless. This proves $1)$. $C$ is clearly connected since any two element of $C$ can be related by the conjugacy by an element of $G$. We have proved $2)$. We prove $3)$. The inclusion $C\to G$ is a quandle inclusion into $\Conj(G)$. Hence, there exists a group morphism $G(C) \to G$ such that $e_c\mapsto c$ for $c\in C$. This proves that $\varphi_C:C\to G(C),c\mapsto e_c$ is injective and hence $C$ is injective by definition. $4)$ follows from $1)$ and $3)$ since $G(C)/Z\simeq Inn(C)$ if $C$ is injective as we have seen in the previous section. $5)$ follows from proposition 3.2 and corollary 3.9 of \cite{Ir}.
\end{proof}
\begin{proposition}
Let $G$ be a finite centerless perfect group with trivial Schur multiplier and $C$ be a conjugacy class generating $G$. The enveloping group $G(C)$ of $C$ is isomorphic to $G\times \mathbb{Z}$.
\end{proposition}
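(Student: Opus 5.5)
We combine the exact sequence of part 4) of the previous proposition with the abelianisation of $G(C)$. Write $\Gamma=G(C)$ and let $p:\Gamma\to G$ be the surjection $e_c\mapsto c$, whose kernel is the center $Z$ by 4). Since $C$ is connected by 2), Section 1 gives $\Gamma^{ab}\simeq\mathbb{Z}$, and the abelianisation map $\alpha:\Gamma\to\mathbb{Z}$ sends every generator $e_c$ to $1$. The candidate isomorphism is
$$\Psi=(p,\alpha):\Gamma\to G\times\mathbb{Z},\qquad e_c\mapsto (c,1).$$
We show that $\Psi$ is injective and surjective.

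\emph{Injectivity.} Let $g\in\ker\Psi$. Then $p(g)=1$ gives $g\in Z$. Also $\alpha(g)=0$, and since $\ker\alpha=[\Gamma,\Gamma]$, we get $g\in[\Gamma,\Gamma]$. Hence $g\in Z\cap[\Gamma,\Gamma]$, and this group is trivial by 5), because the Schur multiplier of $G$ is trivial. So $\Psi$ is injective.

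\emph{Surjectivity.} This is where perfectness of $G$ is used, and it is the step needing the most care. First, $p([\Gamma,\Gamma])=[G,G]=G$ because $p$ is surjective. So for each $h\in G$ there is $g\in[\Gamma,\Gamma]$ with $p(g)=h$, and then $\Psi(g)=(h,0)$. Thus $G\times\{0\}\subset\operatorname{Im}\Psi$. Next, fix $c\in C$. Then $\Psi(e_c)=(c,1)$, so $\operatorname{Im}\Psi$ also contains $(c^{-1},0)(c,1)=(1,1)$. Since $G\times\{0\}$ and $(1,1)$ together generate $G\times\mathbb{Z}$, the map $\Psi$ is surjective.

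Therefore $\Psi$ is an isomorphism $G(C)\simeq G\times\mathbb{Z}$. As a by-product, $[\Gamma,\Gamma]\simeq G$ and $Z\simeq\mathbb{Z}$, generated by the element $g_0e_c$, where $g_0\in[\Gamma,\Gamma]$ is chosen with $p(g_0)=c^{-1}$.
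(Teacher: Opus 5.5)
Your proof is correct and follows the paper's argument: the map $(p,\alpha)=f\times Ab$ into $G\times\mathbb{Z}$ is injective because its kernel lies in $Z\cap[G(C),G(C)]=1$, and surjective because $p([G(C),G(C)])=[G,G]=G$ puts $G\times\{0\}$ in the image. Your explicit step $(c^{-1},0)(c,1)=(1,1)$ spells out what the paper compresses into ``$Ab$ is surjective''.
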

\begin{proof}
By the previous proposition $C$ is connected. Hence, $G(C)^{ab}\simeq \mathbb{Z}$ (seen in the previous section). Let $Ab : G(C)\to \mathbb{Z}$ be the morphism corresponding to the abelianisation morphism. Denote by $f:G(C)\to G$ the morphism of the exact sequence of the previous proposition. Consider $f\times Ab: G(C)\to G\times \mathbb{Z}$. The morphism $f\times Ab$ is injective by $5)$ of the previous proposition. Now $G\times\{0\}$ lies in the image of $G(Q)$. Indeed, the image of $[G(C),G(C)]$ lies in $G\times \{0\}$ and maps onto $[G,G]\times \{0\}=G\times \{0\}$ since $f$ is surjective. The fact that $G\times\{0\}$ lies in the image and $Ab$ is surjective proves that $f\times Ab$ is surjective. We have proven that $f\times Ab$ is injective and surjective. This proves the proposition.
\end{proof}
It follows from the last proposition and the one before that if $C$ and $C'$ are conjugacy classes of different size generating a group $G$ as in the last proposition, then $C$ and $C'$ give two injective connected quandles of different sizes with isomorphic enveloping groups (groups isomorphic to $G\times \mathbb{Z}$). This is the construction we explained in the introduction. For a simple group a non trivial conjugacy class generates the group (see the proof below) and hence a simple group with trivial Schur multiplier and non trivial conjugacy classes with different sizes provides an example. The group $PSL(2,8)$ satisfies these conditions :
\begin{proposition}
\item[1)] $PSL(2,8)$ is simple and hence centerless and perfect.
\item[2)] $PSL(2,8)$ has trivial Schur multiplier.
\item[3)] A non trivial conjugacy class of $PSL(2,8)$ generates it.
\item[4)] $PSL(2,8)$ has $9$ conjugacy classes with sizes given by the list  $[ 1, 72, 72, 72, 63, 56, 56, 56, 56 ]$.
\item[5)] The quandles defined by a non trivial conjugacy classes of $PSL(2,8)$ are finite connected injective quandles and have enveloping groups isomorphic to $G\times \mathbb{Z}$.
\end{proposition}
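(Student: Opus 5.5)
The plan is to establish the five items in order. Items 1), 2) and 4) are classical facts about $PSL(2,q)$ with $q=8$, item 3) is a short group-theoretic argument, and item 5) is then a direct application of the two preceding propositions.

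For 1), I will recall that $PSL(2,q)$ is simple for every $q\geq 4$. Since $8=2^3$ has characteristic $2$, we have $PSL(2,8)=SL(2,8)$, because $-I=I$ and the center of $SL(2,8)$ is trivial. This group has order $(8^2-1)\cdot 8=504$. A simple nonabelian group is centerless, since its center is a proper normal subgroup and hence trivial. It is also perfect, since $[G,G]$ is a nontrivial normal subgroup (as $G$ is nonabelian) and hence equals $G$.

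For 2), I will cite the known computation of Schur multipliers of $PSL(2,q)$ (Schur; see e.g.\ the tables in Gorenstein or Karpilovsky). The multiplier is $\mathbb{Z}/2$ when $q$ is odd and $q\neq 9$, it is $\mathbb{Z}/2\times\mathbb{Z}/2$ for $q=4$ (where the group is $\simeq A_5$ and the multiplier is actually $\mathbb{Z}/2$), it is $\mathbb{Z}/6$ for $q=9$, and it is trivial for all other even $q$, in particular for $q=8$. I expect this to be the only step relying on an external nontrivial result. If a more self-contained route is wanted, I would check that the Sylow $2$-subgroup $(\mathbb{Z}/2)^3$ and the Sylow $3$- and $7$-subgroups, which are cyclic, contribute nothing. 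For the cyclic ones this is automatic. For $p=2$ one needs the stable-element argument on $H^2(\,\cdot\,,\mathbb{C}^*)$ via the normaliser $(\mathbb{Z}/2)^3\rtimes\mathbb{Z}/7$, which is the delicate point, so I would simply cite it.

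For 3), let $C$ be a nontrivial conjugacy class and let $H=\langle C\rangle$. Then $H$ is normal, because it is generated by a conjugation-invariant set, and $H\neq 1$. Simplicity gives $H=G$.

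For 4), I will list the conjugacy classes, viewing $SL(2,8)$ acting on $\mathbb{F}_8^2$:
\begin{itemize}
\item the identity, a class of size $1$;
\item the involutions (unipotent elements), all conjugate, with centraliser the Sylow $2$-subgroup of order $8$, giving a class of size $504/8=63$;
\item the split semisimple elements, whose centraliser is the split torus of order $q-1=7$; these form $(7-1)/2=3$ classes (eigenvalues $\lambda^{\pm1}$), each of size $504/7=72$;
\item the nonsplit semisimple elements, whose centraliser is the nonsplit torus of order $q+1=9$; these form $(9-1)/2=4$ classes, each of size $504/9=56$.
\end{itemize}
As a check, $1+63+3\cdot 72+4\cdot 56=504$, which confirms that there are $9$ classes.

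Finally, 5) follows from 1)--3) together with the first proposition of this section (connected and injective) and the last proposition (the enveloping group is $\simeq G\times\mathbb{Z}$). In particular, classes of sizes $63$, $72$ and $56$ give non-isomorphic quandles with isomorphic enveloping groups.
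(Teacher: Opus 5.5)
Your proof is correct. Items 3) and 5) coincide with the paper's argument; the difference lies in 1), 2) and 4). The paper gives no theoretical argument there. It identifies $PSL(2,8)$ with \texttt{SmallGroup(504,156)} in GAP and reads off simplicity, the trivial multiplier and the class sizes from \texttt{IsSimple}, \texttt{AbelianInvariantsMultiplier} and \texttt{ConjugacyClasses}. You instead use the structure of $SL(2,q)$ in characteristic $2$:
simplicity for $q\ge 4$;
Schur's computation of the multiplier;
the class list obtained from centralisers (unipotent radical of order $q$, split torus of order $q-1$, nonsplit torus of order $q+1$), closed off by the class equation $1+63+3\cdot 72+4\cdot 56=504$.

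The computer check is short and mechanical, but opaque and reliant on the software and the library identification. Your route can be checked by hand and explains where $63=q^2-1$, $72=q(q+1)$ and $56=q(q-1)$ come from. As a result it gives an infinite family at once. For every $q=2^n$ with $n\ge 3$, $SL(2,q)$ is simple with trivial multiplier and has generating classes of three different sizes $q^2-1$, $q(q+1)$, $q(q-1)$, all with enveloping group $SL(2,q)\times\mathbb{Z}$. The case $q=4$ must be left out because $A_5$ has multiplier $\mathbb{Z}/2$.

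Two small corrections to your treatment of 2). First, your quoted table has a self-contradictory clause. $\mathbb{Z}/2\times\mathbb{Z}/2$ never occurs as the multiplier of $PSL(2,q)$; for $q=4$ it is $\mathbb{Z}/2$. This does not affect $q=8$, but it should be removed.

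Second, the $2$-part you call delicate closes in one line, so your self-contained route needs no citation. Restriction embeds the $2$-part of $H^2(G,\mathbb{C}^*)$ into $H^2(P,\mathbb{C}^*)^{N_G(P)}\cong\operatorname{Hom}(\Lambda^2P,\mathbb{C}^*)^{\mathbb{Z}/7}$, where $P\cong\mathbb{F}_8$. The torus acts on $P$ by multiplication by a generator of $\mathbb{F}_8^*$. Hence $\Lambda^2P\cong P^*\otimes\Lambda^3P\cong P^*$ is an irreducible nontrivial $\mathbb{F}_2[\mathbb{Z}/7]$-module, and the invariants vanish.
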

\begin{proof}
Items $1)$ to $4)$ seem to be known in the literature. We give a verification using GAP. The group $PSL(2,8)$ correspond to the group SmallGroup(504,156) of the GAP library. As one can check with GAP the code  G:=SmallGroup(504,156);StructureDescription(G); returns "PSL(2,8)". The code IsSimple(G); returns true hence $PSL(2,8)$ is simple as in $1)$. The code AbelianInvariantsMultiplier(G); gives a list of the abelian invariants of the Schur multiplier of $G$, it returns an empty list and hence $PSL(2,8)$ has trivial Schur multiplier as in $2)$. A non trivial conjugacy class of $PSL(2,8)$ generates a non trivial normal subgroup. Since the group is simple the conjugacy class generates the group. This proves $3)$. The code C:=ConjugacyClasses(G); List(C,Size); gives a list of sizes of the conjugacy classes of the group. The code returns the list in $4)$. $5)$ follows from $1)$ to $3)$ of this proposition and the last two propositions.
\end{proof}
Points $4)$ and $5)$ of the last proposition proves that we have connected injective quandles of size $72,63$ and $56$ with isomorphic enveloping groups and answer the question raised in the introduction.

 \end{document}